\documentclass[draft]{amsart}

\textheight 22cm
\textwidth 16cm

\oddsidemargin 0.1cm
\evensidemargin 0.1cm

\usepackage{verbatim, amssymb, enumerate, mathtools}

\renewcommand \a{\alpha}
\renewcommand \b{\beta}

\newcommand \n{\nabla}
\newcommand \la{\lambda}

\newcommand \br{\mathbb{R}}

\newcommand \Span{\operatorname{Span}}

\newcommand \so{\mathfrak{so}}

\newcommand \og{\mathfrak{so}}

\newcommand\g{\mathfrak g}
\newcommand\h{\mathfrak h}
\newcommand\m{\mathfrak m}

\newcommand\f{\mathfrak f}
\newcommand\ig{\mathfrak i}
\newcommand\slg{\mathfrak{sl}}

\newcommand\Lg{\mathfrak l}
\newcommand\sg{\mathfrak s}

\newcommand \Sym{\operatorname{Sym}}

\newcommand \ad{\operatorname{ad}}

\newcommand \<{\langle}
\renewcommand \>{\rangle}
\newcommand \ip{\< \cdot, \cdot \>}

\newcommand \mU{\mathcal{U}}
\newcommand \cR{\mathcal{R}}

\theoremstyle{plane}
\newtheorem{theorem}{Theorem}
\newtheorem*{theorem*}{Theorem}
\newtheorem*{corollary*}{Corollary}
\newtheorem*{conj*}{Conjecture}
\newtheorem{lemma}{Lemma}

\newtheorem*{prop*}{Proposition}

\theoremstyle{definition}

\newtheorem*{definition*}{Definition}

\theoremstyle{remark}
\newtheorem*{remark*}{Remark}
\newtheorem{example}{Example}

\makeatletter
\renewcommand{\p@enumii}{\theenumi)(}
\makeatother

\begin{document}

\title{Totally geodesic hypersurfaces of homogeneous spaces}

\author{Y.Nikolayevsky}
\address{Department of Mathematics and Statistics, La Trobe University, Melbourne, Victoria, 3086, Australia}
\email{y.nikolayevsky@latrobe.edu.au}



\subjclass[2010]{Primary: 53C30,53C40, secondary: 53B25} 
\keywords{totally geodesic hypersurface, homogeneous space}

\begin{abstract}
We show that a simply connected Riemannian homogeneous space $M$ which admits a totally geodesic hypersurface $F$ is isometric to either (a) the Riemannian product of a space of constant curvature and a homogeneous space, or (b) the warped product of the Euclidean space and a homogeneous space, or (c) the twisted product of the line and a homogeneous space (with the warping/twisting function given explicitly). In the first case, $F$ is also a Riemannian product; in the last two cases, it is a leaf of a totally geodesic homogeneous fibration. Case (c) can alternatively be characterised by the fact that $M$ admits a Riemannian submersion onto the universal cover of the group $\mathrm{SL}(2)$ equipped with a particular left-invariant metric, and $F$ is the preimage of the two-dimensional solvable totally geodesic subgroup. 
\end{abstract}

\maketitle

\section{Introduction}
\label{s:intro}


The study of totally geodesic submanifolds of homogeneous spaces dates back to the classical result of \'{E}lie Cartan from 1927 (\cite{C} or \cite[IV, \S 7]{Hel}), which says that a totally geodesic submanifold of a symmetric space is the exponent of a Lie triple system. Homogeneous totally geodesic submanifolds of nilpotent Lie groups have been extensively studied in \cite{Ebe, KP, CHN1, CHN2}. The classification of totally geodesic submanifolds of nonsingular two-step nilpotent Lie groups is given in \cite{Ebe}.

In the last two decades, a remarkable progress has been achieved in the study of one-dimensional totally geodesic submanifolds --- \emph{homogeneous geodesics} (the geodesics which are the orbits of a one-dimensional isometry group); this includes the deep existence results \cite{Kai, KS, Dus} and the investigation of the \emph{g.o. spaces} --- homogeneous spaces all of whose geodesics are homogeneous (see e.g. \cite{Gor, AN}).

In this paper we investigate the other extremity --- totally geodesic hypersurfaces (not necessarily homogeneous) of homogeneous spaces. As one may expect, the existence of such a hypersurface imposes strong restrictions on the ambient space. In particular, if a homogeneous space admits a totally geodesic hypersurface, then it must be a space of constant curvature, provided it belongs to one of the following classes: irreducible symmetric spaces \cite{CN}, normal  homogeneous spaces \cite{To2}, and more generally, naturally reductive homogeneous spaces \cite{Ts1, To1}. Totally geodesic hypersurfaces and extrinsic hyperspheres in manifolds with special holonomy have been recently studied in \cite{JMS}. By \cite[Proposition~5]{CHN1},
if a nilmanifold admits a totally geodesic homogeneous hypersurface $F$, then its metric Lie algebra is the direct orthogonal sum of a one-dimensional ideal and the ideal tangent to $F$.

We prove the following classification theorem. 

\begin{theorem}\label{t:tg}
Suppose $M$ is a simply connected, connected Riemannian homogeneous space and $F \subset M$ is a complete connected totally geodesic hypersurface. Then one of the following holds. 
\begin{enumerate}[{\rm (a)}]
  \item \label{it:th1}
  $M=M_1(c) \times M_2$, the \textbf{Riemannian product} of a space $M_1(c)$ of constant curvature $c$ and a homogeneous space $M_2$.
  The hypersurface $F$ is the product $F_1(c) \times M_2$, where $F_1(c) \subset M_1(c)$ is totally geodesic.

  \item \label{it:th2}
  $M=\br^m \prescript{}{f}{\times} M_2$, the \textbf{warped product} of $\br^m, \, m >0$, and a homogeneous space $M_2=G/H$, with the warping function $f: M_2 \to \br$ defined by $f(gH)=\chi(g)$, where $\chi: G \to (\br^+, \cdot)$ is a nontrivial homomorphism with $\chi(H)=1$.
  The hypersurface $F$ is the Cartesian product of a hyperplane $\br^{m-1} \subset \br^m$ and $M_2$.

  \item \label{it:th3}
  $M=\br \prescript{}{f}{\times} M_2$, the \textbf{twisted product} of $\br$ and a homogeneous space $M_2$. The hypersurface $F$ is a leaf of the totally geodesic fibration $\{t\}\times M_2, \; t \in \br$.

  Moreover, the curves $\br \times \{x\}, \; x \in M_2$, are congruent helices of order two with the curvature $k$ and the torsion $\kappa \ne 0$. With a particular choice of local coordinates $t$ on $\br$ and $u$ on $M_2$, the twisting function is given by $f(t,u)=(\sinh(\a(u))\cos(\kappa t+\b(u)) +\cosh(\a(u)))^{-2}$, where locally $\a, \b : M_2 \to \br$ satisfy $\|\n \a \|^2 = \sinh(\a)^2 \|\n\b\|^2= k^2$.
\end{enumerate}
\end{theorem}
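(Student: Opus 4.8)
The plan is to combine the rigidity forced by $F$ being totally geodesic with the abundance of such hypersurfaces forced by homogeneity, producing a distinguished $\mathrm{Isom}(M)$-invariant foliation whose leaf space is the constant-curvature factor. First I would observe that, $F$ being totally geodesic, its Weingarten operator vanishes and its normal connection is trivial, so the (locally defined) unit normal $\xi$ is parallel along $F$; hence the geodesics of $M$ meeting $F$ orthogonally are complete (a homogeneous space is complete) and, via Fermi coordinates, realise $M$ --- away from focal points --- as $\br\times F$ with metric $dt^2+g_t$ for a one-parameter family of metrics $g_t$ on $F$, a global diffeomorphism by simple connectedness of $M$ (after replacing $F$ by the double cover on which $\xi$ is globally defined, if $F$ is one-sided). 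Since $A_\xi\equiv0$ and $\xi$ is parallel along $F$, the normal Jacobi fields have vanishing covariant derivative at $t=0$, so $g_t$ is governed by the linear system $A''+R_tA=0$, $A(0)=\id$, $A'(0)=0$ with $R_t=R^M(\,\cdot\,,\partial_t)\partial_t$; the second fundamental form of the parallel hypersurface $F_t$ equals $-\tfrac12\partial_tg_t$, and the Codazzi equation for $F$ reads $R^M(X,Y)\xi=0$ for all $X,Y\perp\xi$ along $F$. (If focal points occur, $R_0$ has a positive eigenvalue, which forces a constant-curvature spherical factor and reduces the analysis to case~(a).)

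Next I would transport $F$ by $\mathrm{Isom}(M)$ to get a totally geodesic hypersurface congruent to $F$ through every point, collect the resulting unit normals into a closed $\mathrm{Isom}(M)$-invariant subset $\mathcal U$ of the unit tangent bundle, and let $\mathcal D$ be the $\mathrm{Isom}(M)$-invariant distribution it spans and $\mathcal V=\mathcal D^\perp$. Feeding the Codazzi identity $R^M(X,Y)\xi=0$ (for all $\xi\in\mathcal U$) into the $\mathrm{Isom}(M)$-invariance of $R^M$ and the second Bianchi identity, I would show: every unit vector of $\mathcal D$ is a normal; $\mathcal V$ is integrable with totally geodesic leaves, all congruent to a fixed homogeneous space $M_2$; the leaf space $B$ is a simply connected homogeneous space which, via the projected hypersurfaces, carries a totally geodesic hypersurface through every point in every direction, and is therefore of constant curvature (the Codazzi identities for all normal directions force all sectional curvatures at a point to coincide); and $F$ is the preimage of a totally geodesic hypersurface $F_B\subset B$.

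It then remains to see how $M$ sits over $B$. If $\mathcal D$ too is integrable with totally geodesic leaves, then --- together with the same property of $\mathcal V$ --- $M$ is a Riemannian product $M_1(c)\times M_2$ with $M_1(c)=B$ and $F=F_1(c)\times M_2$: case~(a). Otherwise the leaves of $\mathcal D$ are totally umbilic and $M\to B$ is a warped or twisted product $B\prescript{}{f}{\times}M_2$. If the warping depends only on the fibre, one gets the warped product $\br^m\prescript{}{f}{\times}M_2$ over flat $B=\br^m$, and homogeneity forces $f(gH)=\chi(g)$ for a homomorphism $\chi\colon G\to(\br^+,\cdot)$ with $\chi(H)=1$, with $F=\br^{m-1}\times M_2$: case~(b). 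Otherwise $B=\br$, $M=\br\prescript{}{f}{\times}M_2$ is a genuine twisted product, and its one-dimensional $\mathcal D$-leaves --- the curves $\br\times\{x\}$ --- are not geodesics; a Frenet computation combined with the homogeneity identities then shows that they are congruent helices of order two with constant curvature $k$ and torsion $\kappa\ne0$, writing out the structure equations of the twisted product in these terms integrates to $f(t,u)=(\sinh(\a(u))\cos(\kappa t+\b(u))+\cosh(\a(u)))^{-2}$ with $\|\n\a\|^2=\sinh(\a)^2\,\|\n\b\|^2=k^2$, and the three-dimensional model carrying the helices is recognised as $\widetilde{\mathrm{SL}}(2)$ with a left-invariant metric for which the two-dimensional solvable subgroup $S$ is totally geodesic, yielding the Riemannian submersion $M\to\widetilde{\mathrm{SL}}(2)$ with $F$ the preimage of $S$: case~(c).

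The hard part will be the middle step: deriving the integrability and totally geodesic nature of $\mathcal V$, the constant curvature of $B$, and the impossibility of ``mixed'' behaviour --- a genuinely two-dimensional rotating block coexisting with higher-dimensional warping --- purely from $\mathrm{Isom}(M)$-invariance of $R^M$ and the Codazzi identities, since a priori the geodesics and curvature of a homogeneous space are unconstrained; this will need a careful interplay of the second Bianchi identity, the homogeneity identities for $\operatorname{Ric}$ and $\n\operatorname{Ric}$, and the vanishing curvature components along $F$. A secondary difficulty will be case~(c), where identifying the $\widetilde{\mathrm{SL}}(2)$ model and extracting the precise twisting function reduces to a delicate but finite computation with left-invariant metrics on $\slg(2)$ and the classification of its totally geodesic subalgebras; handling focal points outside the constant-curvature case, the bookkeeping for $m>1$ in case~(b), and checking exhaustiveness are the remaining, more routine, matters.
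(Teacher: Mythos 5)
Your skeleton matches the paper's: both construct the Gauss image of $F$ under the transitive group, span it to get a pair of complementary invariant distributions ($D$ spanned by the normals, $D^\perp$ tangent to the congruent copies of $F$), prove that $D^\perp$ is totally geodesic and $D$ totally umbilical, and then split into product / warped / twisted cases. However, the two steps you yourself flag as the ``hard parts'' are exactly where your proposed mechanisms break down, so the proposal has genuine gaps rather than merely deferred computations.

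First, to get constant curvature of the $D$-leaves (and to rule out twisting when $\dim D>1$) you propose that the leaf space $B$ ``carries a totally geodesic hypersurface through every point in every direction.'' That would require the Gauss image to be the \emph{entire} unit sphere of $D_o$, but all one knows is that it is a connected, isotropy-invariant subset \emph{spanning} $D_o$; nothing forces it to be the full sphere, and in general it is not. The paper's substitute is the delicate Lemma~\ref{l:ddperp}\eqref{it:curv}: by tracking how the eigenspace decompositions of the Jacobi operators $R_{\Phi(g)}$ (coming from the Codazzi equation at each congruent copy of $F$) vary over the Gauss image, one shows that $D\wedge D$ lies in a single eigenspace of the curvature operator $\cR$; constant curvature of the leaves and the extrinsic-sphere property then follow from the Gauss and Codazzi equations. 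Your route, as stated, does not reach this conclusion. Second, in the one-dimensional case you assert that ``a Frenet computation combined with the homogeneity identities'' shows the $D$-leaves are helices of order \emph{two}. Homogeneity only gives that all Frenet curvatures are constant; the vanishing of the third curvature is not a formal consequence. The paper obtains it from the theorem of Lie--Tits--Hofmann that a codimension-one subalgebra $\g_1\subset\g$ contains the kernel $\mathfrak{i}$ of a homomorphism onto a subalgebra of $\slg(2)$: one shows $T,N_1,N_2\perp\mathfrak{i}$, forces $\g/\mathfrak{i}\simeq\slg(2)$, and reads off $\n_\tau\nu_2=-k_2\nu_1$. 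Without this algebraic input the identification of the $\widetilde{\mathrm{SL}(2)}$ model and the integration of the twisting function do not get off the ground. (A smaller issue: the global Fermi-coordinate decomposition $M\simeq\br\times F$ with metric $dt^2+g_t$ is neither available in general --- focal points, and $D$ possibly of dimension $>1$ --- nor needed; the paper instead invokes the Blumenthal--Hebda de Rham-type decomposition theorem for the global product structure.)
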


The \emph{warped} (the \emph{twisted}) product $M_1 \prescript{}{f}{\times} M_2$ of Riemannian manifolds $(M_1, ds_1^2)$ and $(M_2, ds_2^2)$, with the \emph{warping function} $f: M_2 \to \br^+$ (respectively, with the \emph{twisting function} $f: M_1 \times M_2 \to \br^+$), is the Cartesian product $M_1 \times M_2$ equipped with the metric $f ds_1^2 + ds_2^2$. A smooth curve in a Riemannian space is called a \emph{helix of order $p \ge 0$}, if its first $p$ Frenet curvatures are nonzero constants and the $(p+1)$-st Frenet curvature vanishes (by analogy with curves in $\br^3$, for helices of order two, we call the first two nonzero curvatures the \emph{curvature} and the \emph{torsion}, respectively). Note that we impose the assumption of completeness of $F$ only for convenience; any open portion of a totally geodesic hypersurface of $M$ can be extended to a complete hypersurface by extending all the geodesics.

It follows from Theorem~\ref{t:tg} that apart from Case~\eqref{it:th1}, a totally geodesic hypersurface $F$ is a leaf of a totally geodesic fibration of codimension one.

Theorem~\ref{t:tg} is intentionally stated in a purely ``Riemannian" language (except for a small amount of algebra in Case~\eqref{it:th2}) avoiding the choice of a particular presentation of $M$ as $G/H$. An important question in the theory of totally geodesic submanifolds of homogeneous spaces is when such a submanifold is \emph{homogeneous} (that is, is the orbit of a subgroup of $G$). From Theorem~\ref{t:subm} below (or from the proof of Theorem~\ref{t:tg} given in Section~\ref{s:pf}) one can deduce that in Case~\eqref{it:th3} of Theorem~\ref{t:tg}, the hypersurface $F$ is homogeneous relative to any choice of a connected transitive group $G$ of isometries of $M$. The answer in the other two cases depends on a particular presentation. In Case~\eqref{it:th1} it can easily be in negative: the group $\mathrm{SU}(2)$ with a metric of constant positive curvature contains no two-dimensional subgroups. An example of a non-homogeneous totally geodesic hypersurface from Case~\eqref{it:th2} is given in Section~\ref{s:pf}. Note however that in all the cases, the Riemannian manifold $F$ is homogeneous relative to the induced metric.

Theorem~\ref{t:tg} has the following obvious but useful corollary.
\begin{corollary*}
A compact, simply connected, connected Riemannian homogeneous space that admits a totally geodesic hypersurface $F$ is the Riemannian product of a standard sphere $S^m, \; m \ge 2$, and a compact homogeneous space $M_2$; then $F$ is (a domain of) the product of a great hypersphere $S^{m-1}$ and $M_2$.
\end{corollary*}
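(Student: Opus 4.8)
The plan is to apply Theorem~\ref{t:tg} to a complete connected totally geodesic hypersurface containing the given $F$ --- obtained, if $F$ is not already complete, by extending all geodesics as noted after the statement of Theorem~\ref{t:tg} --- and then to discard all of its alternatives except the first by using the compactness of $M$. Indeed, in Case~\eqref{it:th2} the space $M$ is, as a topological space, the Cartesian product $\br^m \times M_2$ with $m > 0$, and in Case~\eqref{it:th3} it is the Cartesian product $\br \times M_2$; since $\br^m$ and $\br$ are noncompact, neither of these products is compact, contradicting the hypothesis. Hence only Case~\eqref{it:th1} can occur, and $M = M_1(c) \times M_2$.

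Next I would pin down the two factors. The projection $M \to M_1(c)$ is continuous and onto, so $M_1(c)$ is compact; likewise $M_2$ is compact. Both are simply connected, since $\pi_1(M_1(c)) \times \pi_1(M_2) = \pi_1(M)$ is trivial. Therefore $M_1(c)$ is a compact, simply connected, complete Riemannian manifold of constant curvature $c$; this forces $c > 0$, so $M_1(c)$ is a round sphere of radius $1/\sqrt{c}$, i.e. a standard sphere $S^m$, where $m = \dim M_1(c) \ge 2$ because a manifold of nonzero constant curvature has dimension at least two. Combined with the fact, given by Theorem~\ref{t:tg}, that $M_2$ is homogeneous, this yields the asserted decomposition $M = S^m \times M_2$ with $M_2$ a compact homogeneous space.

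Finally, Theorem~\ref{t:tg} describes $F$ (after the completion above) as $F_1(c) \times M_2$ with $F_1(c) \subset S^m$ a complete connected totally geodesic hypersurface; such a hypersurface of the round sphere is a great hypersphere $S^{m-1}$, so the original $F$ is a domain of $S^{m-1} \times M_2$. I do not anticipate any real obstacle: the whole argument is bookkeeping built directly on Theorem~\ref{t:tg}, the only point deserving a word of care being the passage from a possibly incomplete $F$ to a complete one, which is precisely what the remark following Theorem~\ref{t:tg} provides.
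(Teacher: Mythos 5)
Your argument is correct and is precisely the deduction the paper intends when it calls this an ``obvious'' corollary of Theorem~\ref{t:tg}: compactness of $M$ excludes Cases~\eqref{it:th2} and \eqref{it:th3} (whose underlying manifolds fiber over $\br^m$, $m>0$), and in Case~\eqref{it:th1} compactness plus simple connectedness of the factor $M_1(c)$ forces it to be a round sphere of dimension at least two, with $F_1(c)$ a great hypersphere. The one point worth handling carefully --- extending a possibly incomplete $F$ before invoking the theorem --- is exactly what the remark following Theorem~\ref{t:tg} supplies, and you use it correctly.
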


One can give an alternative, more algebraic description of the totally geodesic hypersurface from Case~\eqref{it:th3} of Theorem~\ref{t:tg}. The ``smallest" example of such a hypersurface is constructed as follows.

\begin{example}\label{ex:sl2}
Take $M=\widetilde{\mathrm{SL}(2)}$, the universal cover of the group $\mathrm{SL}(2)$. Denote $\g=\slg(2)$. Let $\f \subset \g$ be a two-dimensional subalgebra and let $N \in \g$ span the one-dimensional subalgebra $\og(2) \subset \g$. Up to automorphism and scaling one can choose, in the defining representation of $\g$,
\begin{equation*}
    N=\left(
         \begin{array}{cc}
           0 & 1 \\
           -1 & 0 \\
         \end{array}
       \right), \qquad
    \f=\bigg\{\left(
         \begin{array}{cc}
           x & y \\
           0 & -x \\
         \end{array}
       \right) \, : \, x, y \in \br \bigg\}\, .
\end{equation*}
Introduce an inner product on $\g$ by requiring that $N \perp \f$ and by specifying it further on $\f$ in such a way that the operator $\pi_\f \ad_N \pi_\f$ is skew-symmetric. Explicitly, choose arbitrary nonzero $a, b \in \br$ and define the inner product $\ip$ in such a way that the following basis is orthonormal:
\begin{equation} \label{eq:sl2inner}
    E_1=a N=a \left(
         \begin{array}{cc}
           0 & 1 \\
           -1 & 0 \\
         \end{array}
       \right), \qquad E_2=2 b \left(
         \begin{array}{cc}
           0 & 1 \\
           0 & 0 \\
         \end{array}
       \right), \qquad
    E_3= b \left(
         \begin{array}{cc}
           1 & 0 \\
           0 & -1 \\
         \end{array}
       \right).
\end{equation}
Then $\f$ is a totally geodesic subalgebra of the metric Lie algebra $(\g, \ip)$ \cite[Theorem~7.2]{Ts2}, and so the subgroup $F_1$ tangent to $\f$ is a totally geodesic hypersurface of $M=\widetilde{\mathrm{SL}(2)}$ equipped with the left-invariant metric obtained from the inner product \eqref{eq:sl2inner}. Note that $F_1$ is isometric to the hyperbolic space and the functions $\a, \b$ from Case~\eqref{it:th3} are, up to scaling, the polar coordinates on $F_1$.
\end{example} 

\begin{theorem}\label{t:subm}
Under the assumptions of Theorem~\ref{t:tg}, either the pair $(M=G/H, F)$ belongs to one of the cases~\emph{\eqref{it:th1}, \eqref{it:th2}}, or otherwise there exists a normal subgroup $N \subset G$ such that $H \subset N, \; G/N \simeq \widetilde{\mathrm{SL}(2)}$, and the projection $\pi:M \to \widetilde{\mathrm{SL}(2)}$ \emph{(}where the metric on $\widetilde{\mathrm{SL}(2)}$ is constructed as in Example~\ref{ex:sl2}\emph{)} is a Riemannian submersion, and $F = \pi^{-1}F_1$.
\end{theorem}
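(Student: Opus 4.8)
We may assume the pair $(M=G/H,F)$ lies neither in Case~\eqref{it:th1} nor in Case~\eqref{it:th2}, so by Theorem~\ref{t:tg} it is of the type described in Case~\eqref{it:th3}: $M=\br \prescript{}{f}{\times} M_2$, the leaves $\{t\}\times M_2$ form a totally geodesic foliation $\mathcal F$ with globally defined unit normal $\xi=\db_t/\|\db_t\|$, the integral curves $\br\times\{x\}$ of $\xi$ are congruent helices of order two with curvature $k>0$ and torsion $\kappa\ne0$, and locally $f(t,u)=(\sinh(\a(u))\cos(\kappa t+\b(u))+\cosh(\a(u)))^{-2}$ with $\|\n\a\|^2=\sinh^2(\a)\|\n\b\|^2=k^2$ and, as one extracts from the proof of Theorem~\ref{t:tg}, $\n\a\perp\n\b$. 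The plan is to first produce a Riemannian submersion $\pi\colon M\to\widetilde{\mathrm{SL}(2)}$ carrying $F$ onto $F_1$, and then, by equivariance, to upgrade it to a presentation $G/N\simeq\widetilde{\mathrm{SL}(2)}$. (If $\dim M=3$ the foliation $\mathcal V$ introduced below is trivial, $\pi$ is the identity, and only the last step is needed.)

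For the first part, recall from the proof of Theorem~\ref{t:tg} that $\n_X\xi=0$ for every $X\perp\xi$ (the shape operator of $\mathcal F$ vanishes), whereas $\n_\xi\xi=k\eta$ for a globally defined unit field $\eta\perp\xi$ tangent to $\mathcal F$ --- the principal normal of the helices. Put $\zeta=\kappa^{-1}(\n_\xi\eta+k\xi)$ (the binormal) and $\mathcal H=\Span\{\xi,\eta,\zeta\}$: this is a smooth rank-three distribution, $G$-invariant because it is built from $\xi$ by the Levi-Civita connection and $\xi$ is preserved by the connected group $G$. The decisive step is to show that $\mathcal V:=\mathcal H^\perp$ is integrable and defines a Riemannian foliation with a manifold leaf space $B$, so that the projection $\pi\colon M\to B$ is a Riemannian submersion. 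Equivalently: writing $\phi=(\a,\b)\colon M_2\to F_1$ for the map into the hyperbolic plane $F_1$ in geodesic polar coordinates, the relations $\|\n\a\|^2=\sinh^2(\a)\|\n\b\|^2=k^2$ and $\n\a\perp\n\b$ say exactly that the (a priori only local) map $\phi$ is a Riemannian submersion with vertical distribution $\{\n\a,\n\b\}^\perp$, and one must verify that $\phi$ is globally defined, which uses the simple connectedness and homogeneity of $M_2$. Granting this, one sets $\pi(t,u)=(t,\phi(u))$; since $\phi$ is surjective (an equivariant submersion onto a homogeneous target), $F=\{0\}\times M_2=\pi^{-1}(\{0\}\times F_1)$.

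Next I would identify $B$ with $\widetilde{\mathrm{SL}(2)}$. On $B$ the images of $\xi,\eta,\zeta$ form an orthonormal frame, and the images of the two complementary foliations $\mathcal F$ and $\{\br\times\{x\}\}$ present $B$ as a twisted product $\br\prescript{}{f_0}{\times}F_1$ with $f_0(t,(\a,\b))=(\sinh(\a)\cos(\kappa t+\b)+\cosh(\a))^{-2}$, where now $(\a,\b)$ are the geodesic polar coordinates on $F_1$ and $f(t,u)=f_0(t,\phi(u))$ by the construction of $\phi$; here $\mathcal F$ descends to a totally geodesic foliation, and the $\br$-curves of $B$ are again helices of order two with the same $k,\kappa$ (the relevant O'Neill terms vanish because $\n_\xi\xi,\n_\xi\eta$ are horizontal). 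Applying Theorem~\ref{t:tg} to $\widetilde{\mathrm{SL}(2)}$ with a metric as in Example~\ref{ex:sl2} and the totally geodesic hypersurface $F_1$ --- which, having tangent space $\f$ at the identity $e$, is the leaf through $e$ of the corresponding foliation --- exhibits $\widetilde{\mathrm{SL}(2)}$ as such a twisted product, with the polar coordinates on $F_1$ playing the role of $\a,\b$. Choosing $a,b$ in \eqref{eq:sl2inner} so that the helix invariants of this metric equal $k$ and $\kappa$, we obtain an isometry $B\simeq\widetilde{\mathrm{SL}(2)}$ identifying the slice $\{0\}\times F_1$ with the subgroup $F_1\subset\widetilde{\mathrm{SL}(2)}$; hence $\pi\colon M\to\widetilde{\mathrm{SL}(2)}$ is a Riemannian submersion and $F=\pi^{-1}(F_1)$.

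Finally, since $\mathcal V$ is $G$-invariant, $G$ acts by isometries on $B=\widetilde{\mathrm{SL}(2)}$ with $\pi$ equivariant; this action preserves the image $\bar\xi$ of $\xi$ and the image $\overline{\mathcal F}$ of $\mathcal F$ (both being $G$-invariant on $M$), and under the identification above $\bar\xi$ is the left-invariant field with $\bar\xi_e=E_1$ while the leaf of $\overline{\mathcal F}$ through $e$ is $F_1$. By the standard structure of the isometry group of a Lie group with a left-invariant metric, a connected transitive isometry group of $\widetilde{\mathrm{SL}(2)}$ contains the group of all left translations, and any isometry fixing $e$ acts at $e$ through an automorphism of $\slg(2)$; an automorphism of $\slg(2)$ that fixes $E_1$ is $\operatorname{Ad}(R_\theta)$ for a rotation $R_\theta$, and preserving $\f$ then forces $R_\theta=\pm I$, so the only such isometry is the identity. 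Hence the $G$-action on $B$ has trivial isotropy and is therefore given by left translations, yielding a surjective Lie group homomorphism $\rho\colon G\to\widetilde{\mathrm{SL}(2)}$ with $\pi(gH)=\rho(g)$. Put $N=\ker\rho$: as left translations act freely and $\rho(h)$ fixes $\pi(o)=e$ for every $h\in H$, we have $H\subset N$; and by construction $G/N\simeq\widetilde{\mathrm{SL}(2)}$, $\pi$ is the associated Riemannian submersion, and $F=\pi^{-1}(F_1)$. The crux is the claim in the second paragraph that $\mathcal H^\perp$ is a Riemannian foliation with a manifold leaf space --- equivalently, that $\phi=(\a,\b)$ is a globally defined Riemannian submersion onto $F_1$ --- for which the explicit twisting function and the differential identities on $\a,\b$ must be used in full, together with a global argument based on simple connectedness; the remaining steps are then routine.
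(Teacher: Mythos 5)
Your proposal takes a genuinely different route from the paper, and unfortunately the step you yourself flag as ``the crux'' is a real gap, not a routine verification. The paper's proof is a three-line algebraic argument resting entirely on Lemma~\ref{l:helix}, which was already established inside the proof of Theorem~\ref{t:tg}: the codimension-one subalgebra $\g_1$ contains the kernel $\mathfrak{I}$ of a homomorphism onto $\slg(2)$ (the Lie--Tits--Hofmann theorem), $\mathfrak{I}$ contains $\h$, and part~\eqref{it:helix3} of the lemma identifies $\Lg=\mathfrak{I}^\perp\cap\m$ \emph{isometrically} with $\slg(2)$ carrying the metric \eqref{eq:sl2inner}. One then simply takes $N$ to be the connected subgroup tangent to $\mathfrak{I}$; the quotient map $gH\mapsto gN$ is automatically a submersion with integrable vertical distribution (its fibres are $N$-orbits), and the Riemannian submersion property and $F=\pi^{-1}F_1$ are read off from the lemma. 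In other words, the normal subgroup comes first and the foliation by its orbits comes for free.

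Your proposal reverses this: you try to build the vertical distribution $\mathcal V=\Span\{\xi,\eta,\zeta\}^\perp$ out of the Frenet data and then recover $N$ at the very end as the kernel of an action on the leaf space. The assertion that $\mathcal V$ is integrable, defines a Riemannian foliation, and has a Hausdorff manifold leaf space --- equivalently that $(\a,\b)$ extends to a globally defined Riemannian submersion of $M_2$ onto the hyperbolic plane --- is precisely what the ideal $\mathfrak{I}$ supplies, and you give no argument for it. Simple connectedness and homogeneity of $M_2$ do not by themselves produce a global submersion from local solutions of $\|\n\a\|^2=\sinh(\a)^2\|\n\b\|^2=k^2$, $\n\a\perp\n\b$ (the local solution is not unique, and leaf spaces of Riemannian foliations on non-compact manifolds need not be manifolds), so this step cannot be waved through. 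Two smaller points: the claim that any isometry of $\widetilde{\mathrm{SL}(2)}$ fixing $e$ acts at $e$ by a Lie algebra automorphism is unjustified --- an isometry fixing a point of a Lie group with left-invariant metric need not be a group automorphism --- so your trivial-isotropy argument needs repair; and the identification of the induced $G$-action on the leaf space with left translations (needed to make $N=\ker\rho$ normal with the right quotient) is asserted rather than proved. All of these difficulties evaporate in the paper's approach because the algebra is done first; if you want to salvage your Riemannian construction, the cleanest fix is to import Lemma~\ref{l:helix} and observe that your $\mathcal V$ is exactly the distribution of $N$-orbits.
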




\section{Proofs}
\label{s:pf}

Let $M=G/H$ be a simply connected, connected Riemannian homogeneous space, with $G$ a simply connected, closed, connected transitive group of isometries acting on $M$ from the left and $H$ the (connected) isotropy subgroup of a point $o \in M$. Let $\pi:G \to M$ be the natural projection with $\pi(e)=o$. Denote $\ip, \; \n$ and $R$ the metric, the Levi-Civita connection and the curvature tensor of $M$ respectively. For vector fields $X,Y \in TM$ we define the operator $X \wedge Y \in \so(TM)$ by $(X \wedge Y)Z=\<X,Z\>Y-\<Y,Z\>X$. Denote $\cR \in \Sym(\so(TM))$ the curvature operator, the symmetric operator defined by $\<\cR(X \wedge Y), Z \wedge V\> =\<R(X,Y)Z, V\>$, where the inner product on the left-hand side is the natural inner product on $\so(TM)$. For a vector $X$ and a subspace $V$ we denote $X \wedge V$ the subspace $\Span(X \wedge Y \, : \, Y \in V)$.

Let $F \in M$ be a connected totally geodesic hypersurface. Without loss of generality we can assume that $o \in F$. Moreover, as $M$ is an analytic Riemannian manifold and as $F$ is totally geodesic, hence minimal, it is an analytic submanifold of $M$. Therefore we can (and will) replace $F$ by a small open disc of $F$ containing $o$. Let $\xi$ be a continuous unit vector field normal to $F$. Consider the \emph{Gauss image} of $F$ defined by $\Gamma(F) = \{dg^{-1}\xi(x) \, : \, x \in F, \, g \in G, \, g(o)=x\}$. The set of pairs $(x,g) \in F \times G$ such that $g(o)=x$ is (locally) diffeomorphic to $\pi^{-1}F \simeq F \times H$, so $\Gamma(F)$ is the image of a continuous (in fact, analytic) ``Gauss map" $\Phi: \pi^{-1}F \to S_o(1)$, where $S_o(1)$ is the unit sphere of $T_oM$. As $H$ is connected, $\Gamma(F)$ is also connected. Moreover, $\Gamma(F)$ is $H$-left-invariant. Then the subspace $D_o = \Span(\Gamma(F)) \subset T_oM$ is $H$-left-invariant, as also is its orthogonal complement $D^\perp_o$. Hence we can define two orthogonal complementary $G$-left-invariant distributions $D$ and $D^\perp$ on $M$ such that $D(o)=D_o$ and $D^\perp(o)=D^\perp_o$. Denote $m = \dim D$.


\begin{lemma}\label{l:ddperp}
In the above notation we have:
\begin{enumerate}[{\rm 1.}]
  \item \label{it:dperp}
  The distribution $D^\perp$ is integrable with totally geodesic leaves. The leaf of $D^\perp$ passing through $o$ locally lies in $F$.
  \item \label{it:d}
  The distribution $D$ is integrable with totally umbilical leaves.
  \item \label{it:curv}
  $D \wedge D$ lies in an eigenspace of $\cR$, so that there exists $\la \in \br$ such that $\cR(X \wedge Y)=\la X\wedge Y$, for all $X,Y \in D$.
\end{enumerate}
\end{lemma}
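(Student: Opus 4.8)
The starting point, and the engine of the whole lemma, is a single curvature identity. Since $F$ is totally geodesic, its shape operator in the direction of the unit normal $\xi$ vanishes and $\xi$ is $\n$-parallel along $F$. Substituting this into the Codazzi equation and using the symmetries of $R$ one gets $R(X,Y)\xi=0$ for all $X,Y$ tangent to $F$, at every point of $F$. Applying an isometry $g\in G$ with $g(o)=x\in F$, and noting that $dg^{-1}$ carries $T_xF=\xi(x)^\perp$ onto $(dg^{-1}\xi(x))^\perp$, I obtain that for every $\eta\in\Gamma(F)$
\begin{equation}\label{eq:keycurv}
R_o(Z,W)\eta=0\quad\text{for all }Z,W\perp\eta, \qquad\text{i.e.}\qquad \cR(\eta\wedge\eta^\perp)\subseteq\eta\wedge\eta^\perp .
\end{equation}
In particular, since $\xi(o)\in\Gamma(F)$, one has $D_o^\perp\subset T_oF$; and transporting $\eta\in\Gamma(F)$ back by $dg$ shows $\xi(x)\in D(x)$ for every $x\in F$, whence $D^\perp|_F\subseteq TF$.

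The plan for assertion~\ref{it:curv} is to upgrade the family \eqref{eq:keycurv}, indexed by the \emph{connected} set $\Gamma(F)$, to a single eigenvalue. Feeding $Z,W\in D_o^\perp$ into \eqref{eq:keycurv} (these are orthogonal to all of $\Gamma(F)$) gives $R_o(Z,W)X=0$ for $X\in D_o$; dually $D_o^\perp\wedge D_o^\perp$ is $\cR$-invariant and $\cR(D_o\wedge D_o)\subseteq D_o\wedge T_oM$. For the rest, write $\cR(\eta\wedge u)=\eta\wedge P_\eta u$ for the symmetric operator $P_\eta$ on $\eta^\perp$ that \eqref{eq:keycurv} provides, and differentiate the invariance of $\eta\wedge\eta^\perp$ along a curve in $\Gamma(F)$: this should force every $v\in T_\eta\Gamma(F)$ to be an eigenvector of $P_\eta$, and then --- comparing the eigenvalues at infinitesimally close points of $\Gamma(F)$ and using the symmetry of $P_\eta$ --- that the associated eigenvalue is a constant $\la$. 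Connectedness of $\Gamma(F)$, together with $\Span\Gamma(F)=D_o$, then gives $\cR(X\wedge Y)=\la\,X\wedge Y$ for all $X,Y\in D_o$; and $\la$ is the same at every point of $M$ because $G$ preserves both $D$ and $\cR$.

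For assertions~\ref{it:dperp} and~\ref{it:d} I would work with the second fundamental forms and the integrability obstructions of the two $G$-invariant distributions $D$ and $D^\perp$; these are tensors, so they may be computed at $o$, e.g.\ via a Nomizu-type formula for $\n$ in terms of a reductive decomposition $\g=\h\oplus\m$. The identity $R_o(D_o^\perp,D_o^\perp)D_o=0$ obtained above should force the bilinear map $D^\perp\times D^\perp\to D$ sending $(Z,W)$ to the $D$-component of $\n_Z W$ to vanish; hence $D^\perp$ is integrable with totally geodesic leaves. Its leaf through $o$ is then a totally geodesic submanifold of $M$ tangent to $D_o^\perp\subseteq T_oF$, hence (being locally $\exp_o(D_o^\perp)$) contained in the totally geodesic $F$ --- this is assertion~\ref{it:dperp}. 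For assertion~\ref{it:d} the same circle of identities, now using the full family \eqref{eq:keycurv} over $\eta\in\Gamma(F)$ and the fact that the normal $\xi|_F$ itself lies in $D$, should show that the integrability obstruction of $D$ vanishes and that the second fundamental form of its leaves is pure trace, i.e.\ the leaves are totally umbilical (their mean curvature vector lying in $D^\perp$); the obstruction to total geodesy is exactly the $D^\perp$-gradient of the conformal factor that will appear in the warped/twisted product.

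The step I expect to be the main obstacle is assertion~\ref{it:curv}. Passing from the pointwise, $\eta$-by-$\eta$ relations \eqref{eq:keycurv} to a single eigenvalue on $D_o\wedge D_o$ requires using the connectedness of $\Gamma(F)$ quite carefully --- one has to rule out a configuration in which $\cR|_{D_o\wedge D_o}$ has several eigenvalues arranged so as to be compatible with every $\eta\wedge\eta^\perp$ --- and for this one needs a precise enough grip on $\Gamma(F)$ itself, concretely a description of $T_\eta\Gamma(F)$ via the differential of the Gauss map $\Phi$, which (because $\n\xi=0$ along $F$) is expressed through the covariant derivatives at $o$ of the Killing fields tangent to $F$.
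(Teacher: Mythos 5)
Your opening identity $\cR(\eta\wedge\eta^\perp)\subseteq\eta\wedge\eta^\perp$ for every $\eta\in\Gamma(F)$ is exactly the paper's starting point for assertion 3, and the observation that $D^\perp$ is tangent to $F$ along $F$ is correct. But the route you propose for assertions 1 and 2 rests on an invalid implication. You want to deduce the vanishing of the second fundamental form of $D^\perp$ (and the umbilicity of $D$) from the curvature relations $R_o(D_o^\perp,D_o^\perp)D_o=0$ and their relatives. Curvature invariance of a subspace is a second-order \emph{consequence} of the existence of a totally geodesic submanifold; it cannot by itself force a first-order conclusion such as integrability or total geodesy of an invariant distribution (curvature-invariant subspaces of homogeneous spaces that are tangent to no totally geodesic submanifold are precisely the phenomenon studied in \cite{Ts2}). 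The paper's proof of assertions 1 and 2 uses no curvature at all: it uses the vanishing of the second fundamental form of $F$ itself, transported by $G$. For 1, vector fields $X,Y\in D^\perp$ are tangent to $F$, so $(\n_XY)_{|o}\perp\xi(o)$; replacing $o$ by any $x=g(o)\in F$ and pulling back by $dg^{-1}$ gives $(\n_XY)_{|o}\perp\eta$ for every $\eta\in\Gamma(F)$, hence $(\n_XY)_{|o}\in D_o^\perp$, and $G$-invariance finishes. For 2, the same transport shows that every $\eta\in\Gamma(F)$ is an eigenvector of the operator $L_X$ on $D_o$ adjoint to the Nomizu operator of $X\in D^\perp$; since $\Gamma(F)$ is connected and spans $D_o$, $L_X$ is a multiple of the identity, which is umbilicity. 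You need this first-order use of total geodesy; your curvature identity has already discarded the information that makes 1 and 2 work.

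For assertion 3 your differentiation along a curve in $\Gamma(F)$ does correctly show that every $v\in T_\eta\Gamma(F)$ is an eigenvector of the Jacobi operator $R_\eta$, but the remaining steps do not follow. First, $T_\eta\Gamma(F)$ may be a proper subspace of $D_o\cap\eta^\perp$ (e.g.\ $\Gamma(F)$ a curve spanning a higher-dimensional $D_o$), and the $2$-vectors $\eta\wedge v$ with $v\in T_\eta\Gamma(F)$ need not span $D_o\wedge D_o$; second, the constancy of the eigenvalue is asserted, not proved. The paper closes exactly this gap by a different mechanism: all eigenvalues of every $R_{\Phi(g)}$ lie in the \emph{finite} spectrum $\{\mu_a\}$ of the fixed operator $\cR$ on $\so(T_oM)$; analyticity of $g\mapsto R_{\Phi(g)}$ makes the multiplicities and the labelling of the eigenspaces $L_s(g)$ locally constant; orthogonality of distinct eigenspaces of $\cR$ yields the cross-relation $(\Phi(g)\wedge L_s(g))\Phi(h)\subseteq L_s(h)$ for different $g,h$, whence for each $s$ either $L_s(g)$ or $\br\Phi(g)\oplus L_s(g)$ is independent of $g$, the latter for at most one $s$; and $D_o=\Span\Gamma(F)$ then sits inside that one distinguished subspace $N_1$ with $\Phi(g)\wedge N_1\subseteq V_1$. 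Without an argument of this global kind, the passage from the pointwise relations to a single eigenvalue on all of $D_o\wedge D_o$ remains open --- as you yourself anticipate.
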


\begin{proof}
1. First note that if $X$ is tangent to $D^\perp$ at some point $x \in F$ and $x=g(o)$, then $dg^{-1}X \in D^\perp_o$ (as $D^\perp$ is $G$-left-invariant), hence $dg^{-1}X \perp dg^{-1}\xi(x)$, so $X \perp \xi(x)$. Thus $D^\perp$ is tangent to $F$.

Now let $X, Y$ be two vector fields tangent to $D^\perp$ in a neighbourhood of $o$. They must be tangent to $F$ at the points of $F$. As $F$ is totally geodesic, we have $(\n_XY)_{|o} \perp \xi(o)$. Moreover, for any $x \in F$ and any $g \in G$ such that $g(o)=x$, the vector field $dgX$ and $dgY$ are tangent to $D$ and to $F$ (at the points of $F$), so $(\n_{dgX}dgY)_{|x} \perp \xi(x)$, hence $(\n_XY)_{|o} \perp dg^{-1}\xi(x)$. It follows that $(\n_XY)_{|o} \in D^\perp_o$. As $D^\perp$ is $G$-left-invariant it follows that everywhere on $G$ we have $\n_XY \in D^\perp$, for any vector fields $X, Y \in D^\perp$. Therefore $[D^\perp,D^\perp] \subset D^\perp$, and the leaves tangent to $D^\perp$ are totally geodesic submanifolds of $M$.

2. Let $\eta \in \Gamma(F)$ and let $g \in G$ and $x \in F$ be chosen in such a way that $\eta=dg^{-1}\xi(x)$. Let $Z' \in T_xF \cap D(x)$ and let $X' \in D^\perp$ be a vector field in a neighbourhood of $x$. Then $(\n_{Z'}X')_{|x} \perp \xi(x)$. Acting by $dg^{-1}$ we obtain that $(\n_{Z}X)_{|o} \perp \eta$ for any $Z \in D_o \cap \eta^\perp$ and for any vector field $X \in D$ in a neighbourhood of $o$. It follows that every $\eta \in \Gamma(F)$ is an eigenvector of the linear operator $L_X$ on $D_o$ defined by $\<L_X N_1, N_2\>=\<(\n_{N_2}X)_{|o},  N_1\>$ ($L_X$ is the adjoint to the Nomizu operator of $X$). As $\Gamma(F)$ is a connected subset of the unit sphere of $T_oM$ spanning $D_o$ we obtain that $L_X$ is proportional to the identity, so that the bilinear form on $D_o \times D_o$ defined by $(Z_1,Z_2) \mapsto \<(\n_{Z_1}X)_{|o}, Z_2\>$ vanishes for all $Z_1 \perp Z_2, \; Z_1, Z_2 \in D_o$. Let $N_1, N_2 \in D$ be orthogonal vector fields in a neighbourhood of $o$ (if $m(=\dim D) =1$, the claim of the assertion is trivial) and let $X \in D^\perp$ be a vector field in a neighbourhood of $o$. At the point $o$ we have $\<\n_{N_1}N_2,X\>=-\<\n_{N_1}X,N_2\>=0$. It follows that $[N_1,N_2] \in D$ for any two orthogonal vector fields $N_1,N_2 \in D$, hence for any such $N_1, N_2$. Then $D$ is integrable and the second fundamental form of the leaves vanishes on any pair of orthogonal vectors. It follows that the second fundamental form (in every direction from $D^\perp$) is proportional to the induced inner product on $D$, hence the leaves are totally umbilical.

3. Let $g \in \pi^{-1}F$ with $x=g(o) \in F$. From the Codazzi equation at $x$ we have $\<R(X,Y)Z,\xi\>=0$, for all $X,Y,Z \in T_xF$. From the symmetries of the curvature tensor it follows that $\<R(\xi,X)Y,Z\>=\<R_{\xi} X, Z\>\<\xi,Y\>-\<R_\xi X, Y\>\<\xi,Z\>$, for all $X,Y,Z \in T_x M$, where $R_\xi:T_x M \to T_x M$ is the Jacobi operator defined by $R_\xi X=R(\xi,X)\xi$. Then $\cR(\xi \wedge X)=(R_\xi X) \wedge \xi$. As $R_\xi$ is symmetric, there exists an orthonormal basis $e_i, \; i = 1, \dots, n-1$, for $T_xF$ such that $\cR(\xi \wedge e_i)= c_i \xi \wedge e_i$, so that the elements $\xi \wedge e_i \in \so(T_xM)$ are the eigenvectors of $\cR \in \Sym(\so(T_xM))$ \cite[Proposition~4.7]{Ts2}. Acting by $dg^{-1}$, we obtain that for every $g \in \pi^{-1}F$ there is a direct orthogonal decomposition $T_oM=\br \Phi(g) \oplus \oplus_{s=1}^{p(g)} L_s(g)$ (where $\Phi$ is the Gauss map and $\Phi(g) \in \Gamma(F)$) such that every subspace $\Phi(g) \wedge L_s(g) \subset \so(T_oM)$ lies in the eigenspace of $\cR \in \Sym(\so(T_oM))$ with the eigenvalue $\la_s(g)$ (here $\la_s(g)$'s are the $c_i$'s without repetitions). Let $\so(T_oM)=\oplus_{a=1}^N V_a$ be the orthogonal decomposition of $\so(T_oM)$ on the eigenspaces of $\cR$, with $\mu_a$ the corresponding eigenvalues. Then every $\la_s(g)$ equals to one of the constants $\mu_a$. As the Jacobi operator $R_\xi$ depends continuously (in fact, analytically) on $g \in \pi^{-1}F$ and all its eigenvalues belong to the finite set $\{\mu_a\}$ we obtain that the number of eigenvalues $p(g)=p$ is constant and up to relabelling, every subspace $\Phi(g) \wedge L_s(g)$ lies in $V_s$. Moreover the dimensions $m_s=\dim L_s(g)$ are constant and the maps $g \mapsto L_s(g)$ are analytic maps from $\pi^{-1}F$ to the Grassmanians $G(m_s,T_oM)$. It follows that for any $g, h \in \pi^{-1}F$ and for any $s \ne l$, we have $\<\Phi(g) \wedge L_s(g), \Phi(h) \wedge L_t(h)\>=0$, so $(\Phi(g) \wedge L_s(g))\Phi(h) \perp L_l(h)$, therefore $(\Phi(g) \wedge L_s(g))\Phi(h) \subset L_s(h)$. Now if $\Phi(h) \not\perp \Phi(g)$, the subspace $(\Phi(g) \wedge L_s(g))\Phi(h)$ has dimension $m_s$, the same as the dimension of $L_s(h)$. So there exists a small enough neighbourhood $\mU \subset \pi^{-1}F$ of $e$ such that for all $g, h \in \mU$ and all $s=1, \dots, p$, we have $(\Phi(g) \wedge L_s(g))\Phi(h) = L_s(h)$, hence $L_s(h) \subset \br \Phi(g) \oplus L_s(g)$. Let $N_s=\Span(L_s(h) \, : \, h \in \mU)$. Then $\dim N_s \ge m_s$ as $\dim L_s(h) =m_s$, and moreover, since $N_s \subset \br \Phi(g) \oplus L_s(g)$, for all $g \in \mU$, we have $\dim N_s \le m_s+1$. So we have two possibilities: either $\dim N_s = m_s$, in which case the subspaces $L_s(h)$ do not depend on $h$: $L_s(h)=N_s$, for all $h \in \mU$; or $\dim N_s = m_s+1$, in which case the subspaces $\br \Phi(g) \oplus L_s(g)$ do not depend on $g$: $\br \Phi(g) \oplus L_s(g)=N_s$, for all $g \in \mU$. But the latter case occurs for no more than one $s=1, \dots, p$. Indeed, if we suppose that $\br \Phi(g) \oplus L_s(g)=N_s$ and $\br \Phi(g) \oplus L_l(g)=N_l$, for all $g \in \mU$ and for some $s \ne l$, then, as $\Phi(g), L_s(g)$ and $L_l(g)$ are mutually orthogonal, we obtain $\br \Phi(g)= N_s \cap N_l$. It follows that $\Phi(g)$ is constant, and so the subspaces $L_s(g) = N_s \cap (\Phi(g))^\perp$ also do not depend on $g$. But then $L_s(g) =\Span(L_s(h) \, : \, h \in \mU) = N_s$, so $\dim N_s = m_s$, a contradiction. So $\dim N_s=m_s+1$ for no more than one $s=1, \dots, p$, and $\dim N_s=m_s$ for all the other $s$.

Now if $\dim N_s=m_s$ for all $s=1, \dots, p$, then the vector $\Phi(g)$ and all the subspaces $L_s(g)$ are constant: $\Phi(g)=\Phi$ and $L_s(g)=N_s$, for all $g \in \mU$, hence for all $g \in \pi^{-1}F$, by analyticity. Then the distribution $D$ is one dimensional and the claim follows trivially.

Otherwise, suppose that $\dim N_1=m_1+1$. Then again $L_s(g)=N_s$, for all $g \in \pi^{-1}F$ and for all $s \ge 2$. We also have $D_o= \Span(\Phi(g) \, : \, g \in \pi^{-1}F)=\Span(\Phi(g) \, : \, g \in \mU)$ by analyticity, so $D_o \subset N_1$. But for any $\g \in \pi^{-1}F$, we have $\Phi(g) \wedge E_1(g) = \Phi(g) \wedge N_1 \subset V_1$ and $V_1$ is the eigenspace of $\cR$ with the eigenvalue $\la_1$. It follows that $\Phi(g) \wedge D_o \subset V_1$, for all $g \in \pi^{-1}F$, hence $D_o \wedge D_o \subset V_1$, as required.
\end{proof}

\begin{proof}[Proof of Theorem~\ref{t:tg}]
Let $\nu \in D^\perp$ be the mean curvature vector field of the totally umbilical foliation on $M$ defined by $D$. As $D$ is $G$-left-invariant, $\nu$ is also $G$-left-invariant.

We consider two cases for $m =\dim D$.

\medskip

\underline{Case 1.} Suppose that $m \, (=\dim D) > 1$. Then from Codazzi equation and from Lemma~\ref{l:ddperp}\eqref{it:curv} we obtain that the $D^\perp$ component of the vector field $\<Z_1,Z_3\>\n_{Z_2}\nu-\<Z_2,Z_3\>\n_{Z_1}\nu$ vanishes, for any $Z_1, Z_2$ and $Z_3$ tangent to $D$. It follows that $\n_Z \nu$ is tangent to $D$, for any $Z$ tangent to $D$, hence the leaves of the foliation defined by $D$ are \emph{extrinsic spheres}.

We can introduce analytic local coordinates $v^1, \dots, v^m, u^1, \dots , u^{n-m}$ in a neighbouhood of any point $x \in M$ in such a way that $D= \Span (\partial/\partial v^\alpha \, : \, \alpha=1, \dots, m), \; D^\perp= \Span (\partial/\partial u^i \, : \, i=1, \dots, n-m)$. The metric of $M$ is given by $ds^2= A'_{\alpha \beta} (u,v) dv^\alpha dv^\beta + B'_{ij}(u,v) du^i du^j$. As $D^\perp$ is totally geodesic, we obtain that $B'_{ij}=B_{ij}(u)$. From the fact that $D$ is totally umbilical we get $A'_{\alpha \beta}(u,v)=f(u,v) A_{\alpha \beta}(v)$ for some positive analytic function $f$. Then $\nu= -\frac12 B^{ij} \partial (\ln f)/\partial u^i\partial/\partial u^j$ and the fact that the leaves tangent to $D$ are extrinsic spheres gives $\partial^2 (\ln f)/\partial u^i\partial v^\a=0$. It follows that $f$ is a product of a function of the $u$'s by a function of the $v$'s, so (with a slight change of notation) $ds^2= f(u) A_{\alpha \beta} (v) dv^\alpha dv^\beta + B_{ij}(u) du^i du^j$, hence $M$ is locally a warped product. Moreover, from Gauss equation and from Lemma~\ref{l:ddperp}\eqref{it:curv}, every leaf tangent to $D$ has a constant curvature in the induced metric. But the isometry of $M$ which maps a point $x=(u,v)$ to a point $y=(u',v)$ on the same leaf is a homothecy with the coefficient $f(u')/f(u)$. It follows that either $f$ is constant or every leaf tangent to $D$ is flat in the induced metric.

Now, if $f$ is a constant, then $M$ is locally a Riemannian product. As $M$ is simply connected, by de Rham Theorem, it is the Riemannian product of a leaf $M_1$ tangent to $D$ and a leaf $M_2$ tangent to $D^\perp$, with both $M_1$ and $M_2$ homogeneous (as $D$ and $D^\perp$ are $G$-left-invariant). Moreover, by Lemma~\ref{l:ddperp}\eqref{it:curv}, $M_1$ has a constant curvature $c$. Let $F_1$ be (the unique complete) totally geodesic hypersurface of $M_1(c)$ whose normal vector at $o$ is $\xi(o)$. Then the hypersurface $F'=F_1 \times M_2$ is totally geodesic and $F$ is an open subset of $F'$ (as a totally geodesic submanifold is locally uniquely determined by its tangent space at a single point). This gives \textbf{Case~(\ref{it:th1}) of Theorem~\ref{t:tg}}.

Now suppose that $f$ is not a constant. From the above and by \cite[Theorem~A]{BH} the manifold $M$ is a global warped product, $M=\br^m \prescript{}{f}{\times} M_2$, where $M_2$ is the leaf tangent to $D^\perp$ passing through $o$ and $f:M_2 \to \br^+$. The isotropy subgroup $G_2 \subset G$ of $M_2$ acts transitively and isometrically on $M_2$, so $M_2$ is the homogeneous space $G_2/H$. Moreover, every $g \in G_2$ acts on the $\br^m$ fibers by the homothecy with the coefficient $f(g(u))/f(u)$. As this ratio must not depend of $u \in M_2$ we obtain that $f(gH)=\chi(g)$, where $\chi: G_2 \to (\br^+, \cdot)$ is a homomorphism with $\chi(H)=1$. Let $\br^{m-1}$ be the hyperplane of $\br^m$ passing through $o$ and orthogonal to $\xi(o)$. Then the hypersurface $F \subset M$, the (Cartesian) product of $\br^{m-1}$ and $M_2$ is totally geodesic and is (the unique complete) totally geodesic hypersurface of $M$ whose normal vector at $o$ is $\xi(o)$. This gives \textbf{Case~(\ref{it:th2}) of Theorem~\ref{t:tg}}.

\medskip

\underline{Case 2.} Suppose that $m \, (=\dim D)=1$. Let $\tau$ be a unit vector field on $M$ which spans $D$ (so that $\tau(o)=\xi(o)$. By construction, $\tau$ is $G$-left-invariant. Moreover, from \cite[Theorem~A]{BH} the manifold $M$ is diffeomorphic to $\br \times M_2$, where $M_2$ is the leaf of $D^\perp$ passing through $o$. The leaf $M_2$ is a totally geodesic hypersurface and $F$ is an open connected subset of $M_2$. Let  $G_1 \subset G$ be the connected isotropy subgroup of $M_2$. Then $H \subset G_1$ and $G_1$ has codimension one in $G$. It follows that $F$ is a homogeneous totally geodesic hypersurface.

Now, if the vector field $\tau$ is geodesic, then we get back to case Case~\eqref{it:th1}, with $M_1(c)$ a Euclidean line. Furthermore, if $\tau$ is not geodesic, but the leaves of $D$ are ``circles" (one-dimensional extrinsic spheres), that is, if $\n_\tau \nu = -\|\nu\|^2\tau$, then repeating the above arguments we get to Case~\eqref{it:th2}, with $m=1$.

Suppose that $\n_\tau \nu \not \parallel \tau$. Consider the Frenet frame $\tau, \nu_1, \nu_2, \dots$ of the one-dimensional leaves of $D$. We have $\n_\tau \tau =k_1 \nu_1 (=\nu), \; \n_\tau \nu_1 = -k_1 \tau+ k_2 \nu_2$. By the $G$-left-invariancy, all the Frenet curvatures $k_1, k_2, \dots$ are constant; by our assumption, at least the first two of them, $k_1$ and $k_2$, are nonzero.

Passing to the level of Lie algebras, we need to exercise a certain caution, as the standard identification procedure is carried out via Killing vector fields, however the vector fields $\tau, \nu_1, \nu_2, \dots$ are not in general Killing. Denote their values at $o$ by the corresponding Roman letters, so that $T=\tau(o), \; N_1=\nu_1(o), \; N_2=\nu_2(o)$, etc. Note that the spans of the vectors $T, N_1, N_2,\dots$ are one-dimensional $H$-submodules of $T_oM$. Let $\g, \g_1$ and $\h$ be the Lie algebras of $G, G_1$ and $H$ respectively. We have $\h \subset \g_1 \subset \g$, with $\g_1$ a subalgebra of codimension one in $\g$. Choose and fix an $\ad(H)$-invariant complement $\f$ to $\h$ in $\g_1$. The corresponding Killing vector fields are tangent to $D^\perp$, and we can identify $\f$ with $D^\perp(o)$. As the inner product is $\ad(H)$-invariant, we can find a one-dimensional $\ad(H)$-invariant complement to $\g_1$ in $\g$ spanned by an element whose corresponding Killing vector field at $o$ equals $T$. We can identify that element with $T$, and the space $\m=\br T \oplus \f \subset \g$, with $T_oM$. Then we obtain 
\begin{equation}\label{eq:tau}
\m=\br T \oplus \f, \qquad T \perp \f, \qquad \g_1=\f \oplus \h, \qquad [T, \h]=0, \qquad [\h, \f] \subset \f, \qquad [\g_1, \g_1] \subset \g_1.
\end{equation}

We have the following lemma:

\begin{lemma}\label{l:helix}
{\ }

\begin{enumerate}[{\rm (a)}]
  \item \label{it:helix1}
  The leaves of $D$ are helices of order two: their first and second Frenet curvatures are nonzero constants, and the third Frenet curvature is zero.

  \item \label{it:helix2}
  Denote $\Lg=\Span(T, N_1, N_2), \; \sg=\Span(N_1, N_2)$ and $\mathfrak{I}=(\m \cap \Lg^\perp) \oplus \h$. Then
  \begin{enumerate}[{\rm (i)}]
    \item \label{it:helix2i}
    $\mathfrak{I}$ is an ideal of $\g$ containing $\h$, with $\g/\mathfrak{I}\simeq\slg(2)$.
    \item \label{it:helix2ii}
    $\g_1=\sg \oplus \mathfrak{I}$.
    \item \label{it:helix2iii}
    $[\Lg,\h]=0$. 
  \end{enumerate}

  \item \label{it:helix3}
  The subspace $\Lg \subset \g$, with the induced inner product and with the Lie algebra structure of $\g/\mathfrak{I}$, is isometrically isomorphic to $\slg(2)$ with the metric \eqref{eq:sl2inner}, with $\sg \subset \Lg$ the totally geodesic solvable subalgebra defined by $\sg =\Span(E_2, E_3)$.
\end{enumerate}

\end{lemma}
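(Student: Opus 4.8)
\emph{Plan.} The idea is to push the whole problem down to the metric Lie algebra $(\g,\ip)$, using that every object in sight is $G$-left-invariant. Since $D^\perp$ is $G$-left-invariant, by homogeneity all of its leaves are totally geodesic hypersurfaces (Lemma~\ref{l:ddperp}\eqref{it:dperp}) with unit normal $\tau$; hence each has vanishing shape operator, $\langle\nabla_X\tau,\tau\rangle=0$, and so $\nabla_X\tau=0$ for $X\perp\tau$, i.e. $\nabla\tau=k_1\,\nu_1\otimes\tau^\flat$. The fields $\tau=\nu_0,\nu_1,\nu_2,\dots$ are $G$-left-invariant (they are built from $\tau$ and the constants $k_i$), so $T,N_1,N_2,\dots$ span trivial $H$-submodules; in particular $[\h,T]=[\h,N_j]=0$. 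The ``caution'' about these fields not being Killing is dealt with once and for all by the remark that for a $G$-left-invariant field $\zeta$ with $\zeta(o)=Z$ one has $[\zeta,Y^*]=0$ for every fundamental field $Y^*$, whence by torsion-freeness and $\zeta(o)=Z^*(o)$ one gets $\nabla_Y\zeta|_o=\nabla_{Z^*}Y^*|_o$: the covariant derivative of $\zeta$ at $o$ coincides with that of the Killing field $Z^*$. Applying this to $\tau$ and the $\nu_j$ and comparing two such expressions via $[Y^*,Z^*]|_o=-[Y,Z]_\m$ gives $[T,N_j]_\m=-k_j N_{j-1}+k_{j+1}N_{j+1}$ (with $N_0:=T$); equivalently, the skew operator $\Theta:=\nabla T^*|_o$ acts on $\Span(T,N_1,N_2,\dots)$ as the tridiagonal Jacobi operator with zero diagonal and off-diagonal entries $\pm k_i$.

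The first-order curvature information comes from feeding $\nabla\tau=k_1\,\nu_1\otimes\tau^\flat$ into the Ricci identity. One obtains $R(X,Z)T=0$ for $X,Z\perp\tau$ and, taking $X=T$, the relation $R_TY=k_1^2\langle Y,N_1\rangle N_1-k_1\,\nabla_Y\nu_1|_o$ for $Y\perp\tau$; symmetry of the Jacobi operator $R_T=R(T,\cdot)T$ together with $\langle\nu_1,\nabla_Y\nu_1\rangle=0$ then forces $R_TN_1=k_1^2N_1$, and consequently $\nabla_{\nu_1}\nu_1=0$. Since $N_1$ is an eigenvector of $R_T$, the argument in the proof of Lemma~\ref{l:ddperp}\eqref{it:curv} shows that $\tau\wedge\nu_1$ is an eigenvector of $\cR$, which together with $R_TN_1=k_1^2N_1$ yields the sharp identity $R(\tau,\nu_1)=k_1^2\,\tau\wedge\nu_1$; combined with the Bianchi identities this gives $R(\tau,X)\nu_1=-k_1^2\langle X,\nu_1\rangle\tau$ and the closed expression $\nabla_X\nu_1=k_1\langle X,\nu_1\rangle\nu_1-k_1^{-1}R_\tau X-k_1\langle X,\tau\rangle\tau+k_2\langle X,\tau\rangle\nu_2$ for the full tensor $\nabla\nu_1$ in terms of the Jacobi operator $R_\tau$.

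Part~\eqref{it:helix1} is the heart of the matter. Differentiating the last formula once more and invoking the Ricci identity for $\nu_1$ in the direction $\tau$ expresses $\nabla\nu_2$ --- hence $\nabla_\tau\nu_2=-k_2\nu_1+k_3\nu_3$ --- through $R_\tau$ and $\nabla_\tau R$; and here, again resolving the non-Killing issue, one uses $\nabla_\tau R|_o=\nabla_TR|_o=\Theta\cdot R|_o$ (valid since $\tau(o)=T^*(o)$ and $T^*$ is Killing), so that $\nabla_\tau R$ is completely determined by $R$ and the skew Jacobi operator $\Theta$. The constraints so obtained, together with the tridiagonal form of $\Theta$ and --- decisively --- the fact that all Jacobi operators $R_\xi$, $\xi\in\Gamma(F)$, have their eigenvalues in the finite spectrum of $\cR$, prevent the chain $N_0,N_1,N_2,\dots$ from continuing past $N_2$, i.e. $k_3=0$. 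Making this termination precise --- explaining exactly why the curvature forbids helix order $\ge 3$ --- is the step I expect to be the main obstacle; everything preceding it is bookkeeping with the connection, and everything after it is pure algebra.

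With $k_3=0$ the rest unwinds. Now $[T,N_1]_\m=-k_1T+k_2N_2$ and $[T,N_2]_\m=-k_2N_1$ both lie in $\Lg=\Span(T,N_1,N_2)$, so $\Theta$ preserves $\Lg$, hence (being skew) also $\f_0:=\m\cap\Lg^\perp$, which gives $[T,\f_0]\subseteq\f_0\oplus\h=\mathfrak{I}$; since $\f_0$ is an $H$-submodule and $[\h,N_i]=0$, a short Jacobi-identity computation using $[\f,\f]\subseteq\g_1=\sg\oplus\mathfrak{I}$ gives $[N_i,\f_0]\subseteq\mathfrak{I}$ as well, so $\mathfrak{I}$ is an ideal of $\g$ containing $\h$. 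As $\dim(\g/\mathfrak{I})=3$, and $\g/\mathfrak{I}$ is perfect (because $k_1,k_2\neq 0$) and carries an element with nonzero nilpotent adjoint ($\bar N_2$), it must be $\slg(2,\br)$, proving~\eqref{it:helix2i}; \eqref{it:helix2ii} is just the decomposition $\g_1=\f\oplus\h=\sg\oplus\f_0\oplus\h$, and \eqref{it:helix2iii} is $[\h,T]=[\h,N_i]=0$. For~\eqref{it:helix3}, the Jacobi identity in $\g/\mathfrak{I}$ forces the remaining structure constant $[\bar N_1,\bar N_2]=-k_1\bar N_2$ (its $\bar T$-component vanishes because $[N_1,N_2]\in\g_1$), and then the orthonormal triple $(T,N_1,N_2)$ with the structure constants $[T,N_1]=-k_1T+k_2N_2$, $[T,N_2]=-k_2N_1$, $[N_1,N_2]=-k_1N_2$ is carried isometrically and isomorphically onto $(\slg(2),\eqref{eq:sl2inner})$ by $T\mapsto E_1$, $N_1\mapsto E_3$, $N_2\mapsto\ve E_2$ $(\ve=\pm1)$, with $b=-k_1/2$ and $a=-\ve k_2/2$ (both nonzero); under this map $\sg=\Span(N_1,N_2)$ goes to $\Span(E_2,E_3)$, the totally geodesic solvable subalgebra of Example~\ref{ex:sl2}.
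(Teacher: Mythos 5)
You have reproduced the setup and the final algebraic identification correctly, but the core of the lemma is left unproven, and you say so yourself: the termination $k_3=0$ in part~\eqref{it:helix1} is exactly the point you defer as ``the main obstacle''. The mechanism you propose for it --- constraints from $\n_\tau R$ together with the fact that the Jacobi operators $R_\xi$, $\xi\in\Gamma(F)$, have eigenvalues in the finite spectrum of $\cR$ --- has essentially no leverage here: in Case~2 the distribution $D$ is one-dimensional, so $\Gamma(F)$ is a single unit vector and the ``finite spectrum'' condition is a statement about one Jacobi operator only, which is automatic. The paper's actual engine is entirely different and purely algebraic: by the classical theorem of Lie--Tits--Hofmann, the codimension-one subalgebra $\g_1$ contains an ideal $\ig$ of $\g$ with $\g/\ig$ embedding into $\slg(2)$, hence of codimension at most three. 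One then shows from \eqref{eq:nu1}, \eqref{eq:nu2} that $T$, $N_1$, $N_2$ are all orthogonal to the projection $\ig'$ of $\ig$ to $\m$, so the codimension is exactly three, $\ig=(\m\cap\Lg^\perp)\oplus\h=\mathfrak{I}$, and $\g/\ig\simeq\slg(2)$. The helix terminates precisely because there is no room left: $(\n_\tau\nu_2)(o)$ is shown to be orthogonal to $\ig'$, to $T$ and to $N_2$, hence proportional to $N_1$. Without this theorem you get neither the termination nor the ideal.

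This gap propagates into part~\eqref{it:helix2}. In your write-up the ideal property of $\mathfrak{I}$ is derived \emph{after} assuming $k_3=0$, via ``$\Theta$ preserves $\Lg$'' and ``a short Jacobi-identity computation gives $[N_i,\f_0]\subseteq\mathfrak{I}$''; but the inclusion $[T,N_2]_\m\in\Lg$ (equivalently, that $[T,N_2]_\m$ has no $\f_0$-component) is not a consequence of $k_3=0$ alone, since $\n_\tau\nu_2|_o=\frac12[T,N_2]_\m+U(T,N_2)$ and the $U$-term must be controlled --- the paper does this using \eqref{eq:tgf} together with $[T,\ig']_\m\subset\ig'$, i.e.\ again using the ideal. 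Likewise the vanishing of the $\sg$-component of $[N_i,\f_0]$ does not follow from the information you have assembled. In short: the Lie--Tits--Hofmann input is not an optional shortcut but the missing idea; parts \eqref{it:helix2iii} and \eqref{it:helix3} of your argument (the $[\Lg,\h]=0$ claim, the perfectness/nilpotent-adjoint identification of $\g/\mathfrak{I}$ with $\slg(2)$, and the explicit isometry onto \eqref{eq:sl2inner}) are fine and agree with the paper, but they rest on foundations you have not laid.
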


\begin{proof} (a and b) From the fact that $\f$ is tangent to a totally geodesic hypersurface (and that $\g_1 \subset \g$ is a subalgebra) we obtain that for all $X, Y \in \f$,
\begin{equation}\label{eq:tgf}
\<[T, X]_\m,Y\> + \<[T, Y]_\m,X\>=0,
\end{equation}
where the subscript $\m$ denotes the $\m$-component. To compute the Frenet frame we shall use the following fact: if $\tilde Y$ and $\tilde Z$ are $G$-left-invariant vector fields on $M$ with $\tilde Y(o)=Y, \; \tilde Z(o)=Z$, then
\begin{equation}\label{eq:derlivf}
(\n_{\tilde Y}\tilde Z)(o)=\frac12[Y,Z]_\m+U(Y,Z), \quad\text{where } 2 \<U(Y,Z),X\>=\<[X,Y]_\m,Z\>+\<[X,Z]_\m,Y\>,
\end{equation}
for all $X \in \m$. Note that the first term on the right-hand side of \eqref{eq:derlivf} differs by the sign from that in the standard formula for the covariant derivative of Killing vector fields (e.g. \cite[Proposition~7.28]{Bes}); equation~\eqref{eq:derlivf} easily follows from that formula and the fact that the Lie bracket of a ($G$-)Killing vector field and a $G$-left-invariant vector field (as of vector fields on $M$) vanishes.

The elements $N_1, N_2, \dots$ of the Frenet frame at $o$ are orthonormal unit vectors in $\f$. From \eqref{eq:tgf}, \eqref{eq:derlivf} and the fact that $k_1 \nu_1=\n_\tau \tau$ we have
\begin{equation}\label{eq:nu1}
\<T, [X, T]_\m\>=k_1\<N_1,X\>,
\end{equation}
for all $X \in \m$. From the Frenet equations we have $\nu_2=k_2^{-1}k_1 \tau+k_2^{-1}\n_\tau \nu_1$. Then for any $X \in \f$, we obtain $\<N_2,X\>= \frac12 k_2^{-1}(\<[T, N_1]_\m,X\>+\<T,[X, N_1]_\m\>+\<N_1, [X,T]_\m\>)= k_2^{-1}\<[T, N_1]_\m,X\>$, since $\g_1$ is a subalgebra and by \eqref{eq:tau}, \eqref{eq:tgf}, \eqref{eq:derlivf}. As $\<\tau, \nu_2\>=0$ we get from \eqref{eq:nu1}
\begin{equation}\label{eq:nu2}
N_2= k_2^{-1}[T, N_1]_\m+k_2^{-1}k_1T.
\end{equation}

By a classical result \cite{Lie, T, Hof}, a subalgebra $\g_1 \subset \g$ of codimension one must contain the kernel $\ig$ of a homomorphism from $\g$ to $\slg(2)$. Denote $\ig'$ the (linear) projection of $\ig$ to $\m$. As $\ig \subset \g_1$, we have $\ig' \subset \f$, so $T \perp \ig'$. Moreover, since $\ig' \subset \ig + \h$ we get by \eqref{eq:tau} $[T, \ig'] \subset [T,\ig] \subset \ig$, as $\ig$ is an ideal. It follows that
$[T, \ig']_\m \subset \ig' \subset \f$. Taking $X \in \ig'$ in \eqref{eq:nu1} we then get $N_1 \perp \ig'$. Furthermore, taking the inner product of \eqref{eq:nu2} with $X \in \ig'$ we get $\<N_2, X\> = -k_2^{-1}\<[T, X]_\m,N_1\>$ by \eqref{eq:tgf}. But from the above, $[T, \ig']_\m \subset \ig'$ and $N_1 \perp \ig'$, so $N_2 \perp \ig'$. Therefore the codimension of $\ig$ in $\g$ is at least three (since $\ig \subset \ig' \oplus \h \subset (\m \cap \Lg^\perp) \oplus \h$), hence it is exactly three, with $\g/\ig\simeq\slg(2)$ and $\ig'=\m \cap \Lg^\perp$ and $\ig=\ig' \oplus \h$. In particular, $\h \subset \ig$. This proves \textbf{assertion~(\ref{it:helix2i})}, with $\mathfrak{I}=\ig$, and \textbf{assertion~(\ref{it:helix2ii})} (in view of \eqref{eq:tau}).

Now from \eqref{eq:tgf}, \eqref{eq:derlivf} we obtain $\<(\n_\tau \nu_2)(o), X\>= -\<[T,X]_\m, N_2\> - \frac12\<[N_2,X]_\m, T\>$, for any $X \in \m$. The right-hand side vanishes for all $X \in \ig'$, and also for $X = N_2$ and $X = T$ (from \eqref{eq:nu1} or from Frenet equations). It follows that $(\n_\tau \nu_2)(o)=-k_2 N_1$, hence $\n_\tau \nu_2(o)=-k_2 \nu_1$, which proves \textbf{assertion~(\ref{it:helix1})}.

Note that for any $X, Y \in \m$ with $[X,\h]=[Y,\h]=0$ we have $[\n_XY,\h]=0$. Indeed, for an orthonormal basis $e_i$ for $\m$ we have $\n_XY=\frac12 \sum_i(\<[X,Y]_\m,e_i\>+\<X,[e_i,Y]_\m\>+\<Y,[e_i,X]_\m\>)e_i$, so $[Z, \n_XY]= \frac12 [Z,[X,Y]_\m]+\frac12 \sum_i (\<X,[e_i,Y]_\m\> + \<Y,[e_i,X]_\m\>)[Z,e_i]$. For the first term on the right-hand side we have: $[Z,[X,Y]_\m]=[Z,[X,Y]]_\m=0$, where the first equality follows from the fact that both $\m$ and $\h$ are $\ad_\h$-invariant, and the second, from the Jacobi identity. As $[Z, \m] \subset \m$ and as $\ad_Z$ is skew-symmetric on $\m$, we obtain for the second term on the right-hand side (the third one is treated similarly): $\sum_i \<X,[e_i,Y]_\m\>[Z,e_i] = \sum_{i,j} \<X,[e_i,Y]_\m\>\<[Z,e_i],e_j\>e_j = -\sum_{i,j} \<X,[e_i,Y]_\m\>\<[Z,e_j],e_i\>e_j = - \sum_j \<X,[[Z,e_j],Y]_\m\>e_j = \sum_j \<X,[[e_j,Y],Z]_\m\>e_j$, by the Jacobi identity. But $[[e_j,Y],Z]_\m = [[e_j,Y]_\m,Z]$, so $\<X,[[e_j,Y],Z]_\m\>=-\<[e_j,Y]_\m,[X,Z]\>=0$. So
 $[\n_XY,\h]=0$.

Therefore, as $T$ commutes with $\h$, the vectors $N_1$ and $N_2$ also do. This proves \textbf{assertion~(\ref{it:helix2iii})}.

(c) The subspace $\Lg$ with the inner product induced from $\m$ is spanned by the orthonormal vectors $T, N_1, N_2$. The fact that $\g/\ig=\slg(2)$ follows from the above. Explicitly, for $X, Y \in \Lg$ denote $[X,Y]_{\Lg}$ the orthogonal projection of $[X,Y]_\m$ to $\Lg$. Then from \eqref{eq:nu2} we get $[T, N_1]_\Lg = k_2N_2-k_1T$. Moreover, from \eqref{eq:tgf}, $\<[T, N_2]_\Lg, N_2\>=0$ and $\<[T, N_2]_\Lg, N_1\>=-\<[T, N_1]_\Lg, N_2\>=-k_2$, and from \eqref{eq:nu1} $\<[T, N_2]_\Lg, T\>=0$. It follows that $[T, N_2]_\Lg = -k_2N_1$. Furthermore, as $N_1, N_2 \in \g_1$, the bracket (in $\g$) also lies in $\g_1$, so $[N_1, N_2]_\Lg \in \Span(N_1, N_2)$. From the Jacobi identity it then follows that $[N_1, N_2]_\Lg=-k_1 N_2$. The isometric isomorphism between the metric Lie algebras $\Lg$ and $\slg(2)$, with the inner product \eqref{eq:sl2inner}, is given by the correspondence $T=E_1, \; N_1=-E_3, \; N_2 = E-2$ and $k_1=2b, \; k_2=2a$.
\end{proof}

As the leaves tangent to $D$ are congruent helices of the second order, we will use a more conventional notation for their curvature and torsion:  $k=k_1$ and $\kappa=k_2$ respectively.

We can now introduce analytic local coordinates $t, u^1, \dots , u^{n-1}$ in a neighbourhood of any point $x \in M$ in such a way that $D= \Span (\partial/\partial t), \; D^\perp= \Span (\partial/\partial u^i \, : \, i=1, \dots, n-1), \; t(x)=u^i(x)=0$, and the leaf of $D$ passing through $x$ is parametrised by the arclength. Then the metric of $M$ in a neighbourhood of $x$ is given by $ds^2= e^{2\phi(u,t)} dt^2 + B_{ij}(u,t) du^i du^j$, where $\phi$ is an analytic function with $\phi(0,t)=0$ and $B$ is analytic and positively definite. We have
\begin{equation}\label{eq:frenet}
\begin{gathered}
\tau= e^{-\phi}\frac{\partial}{\partial t}, \qquad k \nu_1= \n_\tau \tau = - B^{ij} \frac{\partial \phi}{\partial u^i} \frac{\partial}{\partial u^j}, \qquad k^2= B^{ij} \frac{\partial \phi}{\partial u^i} \frac{\partial \phi}{\partial u^j} = \mathrm{const} \ne 0,\\
k \kappa \nu_2 = \n_\tau(k \nu_1) + k^2 \tau = -e^{-\phi} B^{ij} \frac{\partial^2 \phi}{\partial u^i \partial t} \frac{\partial}{\partial u^j}, \qquad \n_\tau(k \kappa \nu_2) = -e^{-\phi} B^{ij} \frac{\partial}{\partial t}\Big(e^{-\phi} \frac{\partial^2 \phi}{\partial u^i \partial t}\Big)  \frac{\partial}{\partial u^j} \, .
\end{gathered}
\end{equation}
As the leaves tangent to $D$ are helices of order two, we have from Frenet equations: $\n_\tau(k \kappa \nu_2) = -k \kappa^2 \nu_1$, so by \eqref{eq:frenet}, $\frac{\partial}{\partial t}\big(e^{-\phi} \frac{\partial^2 \phi}{\partial u^i \partial t}\big) = - \kappa^2 e^{\phi} \frac{\partial \phi}{\partial u^i}$, for all $i=1, \dots, n-1$, which gives $\frac{\partial}{\partial u^i}\big(\frac{\partial^2 \phi}{\partial t^2}-\frac12 (\frac{\partial \phi}{\partial t})^2 + \frac12 \kappa^2 e^{2\phi}\big)=0$. It follows that $\frac{\partial^2 \phi}{\partial t^2}-\frac12 (\frac{\partial \phi}{\partial t})^2 + \frac12 \kappa^2 e^{2\phi}= h(t)$, for some analytic function $h$. But $\phi(0,t)=0$, so $h(t) = \frac12 \kappa^2$. This gives the equation $\frac{\partial}{\partial t}\big(e^{-\phi}(\frac{\partial \phi}{\partial t})^2+\kappa^2 (e^{\phi}+e^{-\phi})\big) =0$. Solving this equation we get $e^{-\phi}=\sinh(\a(u))\cos(\kappa t+\b(u))+\cosh(\a(u))$ for some analytic functions $\a, \b$ on a neighbourhood of $x \in M_2$. This gives the required expression for the twisting function in \textbf{Case~(\ref{it:th3}) of Theorem~\ref{t:tg}}. A direct calculation using the fact that $k^2= B^{ij} \frac{\partial \phi}{\partial u^i} \frac{\partial \phi}{\partial u^j}$ from \eqref{eq:frenet} shows that $\|\n \a \|^2 = \sinh(\a)^2 \|\n\b\|^2= k^2$, as required.
%
\end{proof}

As we can see from the proof, in Case~\eqref{it:th3}, the subgroup $G_1 \subset G$ acts transitively on $F$, so $F$ is a homogeneous totally geodesic hypersurface. To some surprise, there exist non-homogeneous totally geodesic hypersurfaces belonging to Case~\eqref{it:th2}, as the following example shows. 

\begin{example}\label{ex:nonhomo}
Consider the metric solvable Lie algebra with an orthonormal basis $Z, X_1, X_2, Y$ and with the nonzero brackets $[Z,X_1]=X_1+X_2, \; [Z,X_2]=-X_1+X_2, \; [Z,Y]=2Y$. The corresponding left-invariant metric on $\br^4$ is given by $ds^2=dz^2+e^{4z}dy^2+e^{2z}(dx_1^2+dx_2^2)$, with $Z=-\frac{\partial}{\partial z}, \; Y=e^{-2z}\frac{\partial}{\partial y}, \; X_1=e^{-z}\cos z \frac{\partial}{\partial x^1}-e^{-z}\sin z \frac{\partial}{\partial x^2}, \; X_2=e^{-z}\sin z \frac{\partial}{\partial x^1}+e^{-z}\cos z \frac{\partial}{\partial x^2}$. The resulting homogeneous space indeed belongs to Case~\eqref{it:th2}, with $M_2=\{x_1=x_2=0\}$, the hyperbolic space. Moreover, the hypersurface $F=\{x_1=0\}$ is totally geodesic, but $\Span(Z,Y,X_2)$ is not a subalgebra.
\end{example}

\begin{proof}[Proof of Theorem~\ref{t:subm}]
Suppose the pair $(M=G/H, F)$ belongs to Case~\eqref{it:th3} of Theorem~\ref{t:tg}. In the notation of Lemma~\ref{l:helix}, let $N$ be the connected (normal) subgroup of $G$ tangent to the ideal $\mathfrak{I}$. Then by Lemma~\ref{l:helix} \eqref{it:helix2i} $N \supset H$ and $G/N=\widetilde{\mathrm{SL}(2)}$ (as the Lie group). Moreover, from Lemma~\ref{l:helix} \eqref{it:helix3} it follows that the projection $\pi: M \to \widetilde{\mathrm{SL}(2)}$ (defined by $\pi(gH)=gN$ for $g \in G$), where  $\widetilde{\mathrm{SL}(2)}$ is equipped with the left-invariant metric defined by the inner product \eqref{eq:sl2inner} (with some specific choice of the constants $a$ and $b$), is a Riemannian submersion. Then the projection of $F$ to $\widetilde{\mathrm{SL}(2)}$ is the connected Lie subgroup $G_1/N$ whose Lie algebra is spanned by $\sg$.
\end{proof}

From Theorem~\ref{t:subm} (or from Lemma~\ref{l:helix}) it follows that Case~\eqref{it:th3} of Theorem~\ref{t:tg} may only occur if the semisimple part of the Levi-Mal'cev decomposition of the Lie algebra of (any) transitive group of isometries of $M$ contains an ideal isomorphic to $\slg(2)$.







\end{document}